 \newtheorem{Thm}{Theorem}[section]
 \newtheorem{Lem}[Thm]{Lemma}
 \newtheorem{Prop}[Thm]{Proposition}
 \newtheorem{Cor}[Thm]{Corollary}
\theoremstyle{remark}
 \newtheorem{Rem}[Thm]{Remark}
\theoremstyle{definition}
 \newtheorem{Def}[Thm]{Definition}
\numberwithin{equation}{section}
\newcommand\ol[1]{\overline{#1}}
\newcommand\braid{\sigma}
\newcommand\tr{\operatorname{tr}}
\newcommand\End{\operatorname{End}}
\newcommand\hit{\triangleright}
\newcommand\bhit{\blacktriangleright}
\newcommand\inv{^{-1}}
\def\HM#1.#2.#3.#4.{{^{#1}_{#3}\mathcal M^{#2}_{#4}}}
\newcommand\id{\operatorname{id}}
\newcommand\ot{\otimes}
\newcommand\Vect{\operatorname{Vec}}
\newcommand\ptr{\operatorname{ptr}}
\newcommand\Q{\mathbb Q}
\newcommand\CC{\mathbb C}
\newcommand\CCu{\CC^\times}
\newcommand\ZZ{\mathbb Z}
\newcommand\C{\mathcal C}
\newcommand\CTR{\mathcal Z}
\newcommand\semdir{\rtimes}
\newcommand\semidir\rtimes
\newcommand\Gal{\operatorname{Gal}}
\newcommand\ab{{\operatorname{ab}}}
\newcommand{\gb}{\overline{g}}
\newcommand{\hb}{\overline{h}}
\newcommand{\kb}{\overline{k}}
\begin{document}
\title[Invariants of Modular Tensor Categories]{A topological invariant for modular fusion categories}
\author{Ajinkya Kulkarni}
\email{ajinkya.kulkarni@u-bourgogne.fr}
\author{Michaël Mignard}
\email{mignard@math.cnrs.fr}
\author{Peter Schauenburg}
\email{peter.schauenburg@u-bourgogne.fr}
\address{Institut de Math{\'e}matiques de Bourgogne, UMR 5584 CNRS
\\
Universit{\'e} Bourgogne Franche-Comté\\
F-21000 Dijon\\France}
\subjclass[2010]{18D10,16T99}

\keywords{}

\begin{abstract}
The modular data of a modular category $\C$, consisting of the $S$-matrix and the $T$-matrix, is known to be an incomplete invariant of $\C$. More generally, the invariants of framed links and knots defined by a modular category as part of a topological quantum field theory can be viewed as numerical invariants of the category. Among these invariants, we study the invariant defined by the Borromean link colored by three objects. Thus we obtain a tensor that we call $B$. We derive a formula for the Borromean tensor for the twisted Drinfeld doubles of finite groups. Along with $T$, it distinguishes the $p$ non-equivalent modular categories of the form $\CTR(\Vect_G^\omega)$ for $G$ the non-abelian group $\ZZ/q\ZZ \semdir \ZZ/p\ZZ$, which are not distinguished by the modular data.
\end{abstract}
\maketitle

\section{Introduction}
The modular data of a modular category $\C$ comprises the $S$- and $T$-matrices, two square matrices indexed by the isomorphism classes of simples; they define a projective representation of the modular group. On the one hand one could say that these two matrices are just particular instances of the topological invariants defined by a modular category in the framework of a topological quantum field theory: the $S$-matrix is the invariant defined for a Hopf link colored by two simples of the category, and the $T$-matrix contains the components of a kink. On the other hand, one may feel that the modular data is somewhat privileged among the topological invariants associated to the category: Invertibility of the $S$-matrix already features in the definition; invariance properties with respect to the modular group are key for the appearance of modular categories in conformal field theory; last but not least properties of the modular data are important for the purely algebraic study of modular categories. The importance of the modular data has led to the question being seriously considered (and stated as not quite a conjecture in \cite{MR3486174}) as to whether a modular category (and hence the TQFT associated to it) is already determined fully by the modular data. This was refuted in \cite{2017arXiv170802796M} by a family of examples that are taken among the particularly accessible class of group-theoretical modular categories, more specifically the Drinfeld centers of pointed fusion categories, which were already considered, in the guise of representation categories of twisted Drinfeld doubles of finite groups, in \cite{MR1128130}. It turns out in fact that arbitrarily many inequivalent modular categories can give rise to the same modular data; the examples are defined by the same noncommutative group, endowed with different three-cocycles; the smallest example in \cite{2017arXiv170802796M} concerns the nonabelian group of order $55$, although it is not known whether smaller examples exist.

This result naturally gives rise to the following question: Can one find other invariants that distinguish the modular categories in these families? Note that this was not how the categories were found to be distinct in \cite{2017arXiv170802796M}, where rather the inexistence of suitable equivalences was proved through the characterization of such equivalences via Morita equivalence of pointed fusion categories. Recent general results on the correspondence between modular categories and topological quantum field theories \cite{2015arXiv150906811B} imply that the entire extended TQFT defined by the category can be viewed as a complete invariant. This does not, of course, solve the concrete problem of finding invariants that one can compute for specific categories and use to distinguish them ---the entire TQFT is a rather formidable collection of data.

The simple idea of the present paper is to consider the invariant of a certain framed link defined by the modular categories in question and view it as an invariant of the category, much like the invariant of the Hopf link giving the $S$-matrix. The particular link we will use is known as the borromean rings. This is partly an obvious candidate for naive reasons: It is the closure of a three-strand braid, which is the next more complicated thing over the two-strand braid whose closure is the Hopf link; having three strands might allow the associativity constraint of the category (which, after all, is encoded in the three-cocycle that makes the basic difference in the aforementioned examples)  to have a greater influence on the invariant. Since the invariant obtained from a full twist on three strands (like the Hopf link comes from a full twist on two strands) is easily seen to be determined by the modular data, making inverse braidings appear seems necessary, and the braid whose closure gives the borromean rings does this in a somewhat symmetric fashion. It may also be a good candidate for a slightly less naive reason: The borromean rings are three circles that are pairwise not linked, yet form a nontrivial link. This is a somewhat subtle topological phenomenon which one may hope gives rise to an invariant whose properties are \emph{not} covered by those of the $S$-matrix, which records precisely what happens if two rings are linked. (In fact, the rings seem to have appeared in 15th century Italy as a heraldic symbol for this very reason: They are supposed to symbolize the political (and marital) alliances between the Borromeo, Sforza, and Visconti families, which was such that removing any one of them would have broken the alliance of the three.) Whether the motivations are justified by the success is perhaps doubtful: The borromean tensor does not at all distinguish the categories described above. In fact it does not seem to ``see'' the three-cocycle at all that makes the difference between the categories. It is only the $T$-matrix taken together with the $B$-tensor that makes it impossible to find a bijection between the simples of the different categories that would map these data to each other.

The same general idea of using link invariants to distinguish modular categories not distinguished by modular data was also pursued in \cite{2018arXiv180505736B}, where the authors show that the invariant of the Whitehead link, along with the $T$-matrix, does distinguish the five inequivalent modular categories defined from the nonabelian group of order $55$. We are grateful to the authors for letting us see an advance copy of their preprint. At the time we knew by computer experiments that the invariant of the borromean rings, taken together with the modular data, also distinguishes the categories in this particular example, and we knew which components of the ``borromean tensor'' (given by the invariants of the borromean link with its three components colored by three simples of the category) are responsible for this success. We had not finished writing our findings, and we had not completed the results in \cref{sec:Example} showing that the $T$-matrix together with the borromean tensor is sufficient to distinguish the modular categories associated to the nonabelian groups of order $pq$ (for all primes $p,q$ for which such a group exists).

The paper is organized as follows: After introducing conventions and notations, we first revisit the modular data of twisted Drinfeld doubles to give a slightly improved formula for the $S$-matrix, but mostly to introduce the methods to be used later. We formally define the Borromean tensor in \cref{sec:borromean-tensor} and record some symmetry properties it enjoys. In \cref{sec:borr-tens-twist} we give an explicit formula for the Borromean tensor for twisted Drinfeld doubles, with some useful specializations that we then use in \cref{sec:Example} to explicitly distinguish the inequivalent modular categories found in \cite{2017arXiv170802796M} by the new numerical invariant that is the $T$-matrix together with the $B$-tensor. An appendix gives some GAP codes for computing the $B$-tensor (and the $S$-matrix). Experimenting with computer calculation was an important step in our work, although computer help is not needed to prove the main result; some calculations were performed using HPC resources from PSIUN CCUB (Centre de Calcul de
l'Université de Bourgogne).

\section{Preliminaries}

Throughout the paper we will consider modular categories, that is, braided spherical $\CC$-linear fusion categories $\C$ such that the square matrix $S$ whose coefficients are the traces of the square of the braiding on pairs of simple objects is invertible. We refer to \cite{BakKir:LTCMF,EtiNikOst:FC,MR3242743} for background. We will denote by $(X_i)_{i\in I}$ a set of representatives of the isomorphism classes of simple objects of $\C$, and write $i^*\in I$ for the element such that $X_{i^*}=X_i^*$ is the dual object. We will denote the pivotal trace in the category $\C$ of an endomoprhism $f$ by $\ptr(f)$.

One \emph{raison d'être} of modular categories is that they allow the definition of a topological quantum field theory. In particular they define invariants in $\CC$ of framed knots and links. We will freely use graphical notation for morphisms in a modular category $\C$. The framed link invariant defined by a modular category can be viewed as follows: The link is the closure of a braid. The braid in question, colored by objects in $\C$,  defines an endomorphism of a tensor product of objects in $\C$, and taking the closure of the braid corresponds to taking the (pivotal) trace of the endomorphism. To fix notations regarding this procedure, there is a representation of the braid group on $n$ strands
$$R\colon\mathbb B_n\to\operatorname{Aut}_{\C}\bigl( (\dots(V\ot V)\ot V)\ot V)\dots \ot V)\bigr)$$
on the tensor product of $n$ copies of any object $V\in\C$. In the case of a non-strict category (as indicated by the parentheses) this involves both instances of the braiding $\braid$, and instances of the associator isomorphism $\Phi$. We will also need to consider analogous morphisms defined on tensor products of distinct objects; informally we will write
\begin{multline*}
  R(\beta)\colon (\dots((V_1\ot V_2)\ot V_3)\dots\ot V_n)\\\rightarrow (\dots((V_{\beta\inv(1)}\ot V_{\beta\inv(2)})\ot V_{\beta\inv(3)})\dots\ot V_{\beta\inv(n)}) 
\end{multline*}
for $\beta\in\mathbb B_n$ and $V_1,\dots,V_n\in\C$, where $\beta$ also denotes the underlying permutation of $\beta$. We note that  $R(\beta\beta')=R(\beta)R(\beta')$ in the obvious sense for two braids $\beta,\beta'$.

The modular categories in which we will do extensive calculations are the Drinfeld centers of pointed fusion categories; an alternative description of these is as module categories of twisted Drinfeld doubles of finite groups \cite{MR1128130}, although these quasi-Hopf algebras will play no explicit role in this paper. We need to fix notations and collect a few useful identities.

We will write $g\hit h=ghg\inv$ for the action of a group on itself by conjugation, $C_G(g)$ for the centralizer of $g$ in $G$, and $g^G$ or $\ol g$ for the conjugacy class of $g\in G$. Any pointed fusion category is of the form $\Vect_G^\omega$, the category of finite-dimensional $G$-graded vector spaces with associativity constraint given by
\begin{align*}
  \Phi\colon (U\ot V)\ot W&\longrightarrow U\ot (V\ot W)\\
             (u\ot v)\ot w&\longmapsto \omega(|u|,|v|,|w|)u\ot(v\ot w) 
\end{align*}
for $U,V,W\in \Vect_G^\omega$ and homogeneous elements $u\in U,v\in V,w\in W$, where $|u|$ is our notation for the degree of the homogeneous element $u$, and $\omega\colon G^3\rightarrow\CCu$ is a three-cocycle. In the sequel, we will always tacitly assume that elements of graded vector spaces are homogeneous in writing such formulas.

The category $\Vect_G^\omega$, and by extension its Drinfeld center below, is spherical with respect to the canonical pivotal structure characterized by the property that pivotal dimensions coincide with the usual vector space dimensions. We will not explicitly need the pivotal structure, but only the fact that pivotal traces of endomorphisms are simply the usual traces of the underlying linear maps.

The (right) Drinfeld center $\mathcal{Z}(Vec^\omega_G)$ is a modular category. The structure of an object $(W,\sigma_{\cdot,W})$ in the center, with $\sigma_{V,W}\colon V\ot W\to W\ot V$ the half-braiding for $V\in\Vect_G^\omega$ can be described in terms of an action (not quite a group action) of $G$ on $W$ \cite{Maj:QDQHA}. More precisely, giving a half-braiding is equivalent to giving a map
$\hit\colon G\otimes W\to W$ subject to the conditions
\begin{align}
  |g\hit w|&=g\hit|w|\\
  e\hit w&=w\\
  g\hit h\hit w&=\alpha_{|w|}(g, h)gh\hit w
\end{align}
for $g,h\in G$ and $w\in W$, the equivalence being described by the formula
\begin{equation*}
  V\ot W\ni v\ot w\mapsto |v|\hit w\ot v\in W\ot V.
\end{equation*}

Let $V\in\CTR(\Vect_G^\omega)$. Since acting by an element $g\in G$ is a vector space automorphism of $V\in\Vect_G^\omega$ which conjugates degrees, any object decomposes as the direct sum of objects where the degrees of nonzero homogeneous components form a conjugacy class. Assume that the degrees of the nonzero components of $V$ form a conjugacy class. Then $G$ permutes those homogeneous components transitively, and elements in the centralizer $C_G(g)$ map the homogeneous component $V_g$ to itself. In particular, $V_g$ is an $\alpha_g$-projective representation of $C_G(g)$ where
\begin{equation}\label{alpha}
\alpha_g(x,y)=\omega(x,y,g)\omega^{-1}(x,y \hit g,y)\omega(xy \hit g ,x ,y)
\end{equation}
and the structure of $V$ is determined by this projective representation for any one $g$ in the conjugacy class. In particular, simple objects of $\CTR(\Vect_G^\omega)$ are parametrized by pairs $(g,\chi)$ where $g$ runs through a system of representatives for the conjugacy classes of $G$, and $\chi$ is an irreducible $\alpha_g$-projective character of $C_G(g)$.

It is convenient to note how to obtain the $C_G(x)$-projective character $\chi'$ describing the action of $C_G(x)$ on $V_x$ when $x$ belongs to the same conjugacy class, say $x=f\hit g$. So let $c\in C_G(x)$ and $v\in V_g$. We have

\begin{align}\label{1}
c \hit (f \hit v) &= \alpha_{g}(c, f) cf \hit v\\
f \hit ((f\inv\hit c) \hit v)   &= \alpha_{g}(f, f^{-1}\hit c) cf \hit v
\end{align}
and therefore
\begin{equation*}
  c\hit f\hit v  = \alpha_{g}(c, f) \alpha_{g}\inv(f, f^{-1}\hit c) f \hit (f\inv\hit c)\hit v .
\end{equation*}
This means that the diagram
\begin{equation*}
\xymatrix {V_g \ar[rr]^{f\hit} \ar[dd]_{(f^{-1} \hit c)\hit}   & & V_x \ar[dd]^{c\hit}\\
\\
 V_g \ar[rr]^{f\hit}  & & V_x}
\end{equation*}
commutes up to the scalar factor $\alpha_{g}(c, f) \alpha_{g}\inv(f, f^{-1}\hit c)$. By cyclicity of the trace, the projective character of the projective $C_G(x)$-representation $V_x$ is therefore $\chi^x$, given by
\begin{equation}\label{chi}
  \chi^x:=(f\hit\chi)(c):=\alpha_{g}(c, f) \alpha_{g}\inv(f, f^{-1}\hit c)\chi(f\inv\hit c).
\end{equation}
In particular this expression does define a projective character, and does not depend on the choice of $f\in G$ with $f\hit g=x$.

The inverse of the braiding in $\CTR(\Vect_G^\omega)$ is given by $\sigma\inv(w\ot v)=v\ot |v|\inv\bhit w$, where $\bhit\colon\CC G\otimes V\to V$ is such that $g\hit g\inv\bhit v=g\inv\bhit g\hit v=v$. From
\begin{align*}
  f\inv \hit (f \hit v) &= \alpha_{|v|}(f\inv, f) v,\\
  f\hit f\inv\hit v&=\alpha_{|v|}(f,f\inv)v
\end{align*}
one reads off
\begin{align}\label{blackhit}
f\inv\bhit v&=\alpha_{|v|}\inv(f,f\inv) f\inv\hit v,\\
  f\inv\bhit v&=\alpha_{f\hit|v|}\inv(f\inv,f) f\inv\hit v,
\end{align}
respectively.

\section{The modular data of $\CTR(\Vect_G^\omega)$}
It is of course well-known how to compute the modular data of the twisted Drinfeld double of a finite group \cite{MR1770077}. In particular the $T$-matrix is given by
\begin{equation}
  \label{eq:tmatrix}
  T_{(g,\chi)}=\Theta(g,\chi)=\frac{\chi(g)}{\chi(1)}.
\end{equation}
We will rederive a formula for the $S$-matrix with only a slight advantage: The formula from \cite{MR1770077} involves a double sum, over two conjugacy classes, or twice over the group. Our formula has only one sum. The $S$-matrix is the trace of a braid on two strands (whence the two sums, related to the two objects coloring the strands); generally, the invariant obtained from taking the trace of a braid on $n$ strands would involve $n$ summations, over the conjugacy classes associated to the objects, but one can get away with only $n-1$ summations by a simple trick based on a well-known fact.

In the graphical calculus, taking the trace of the image of a braid in a pivotal monoidal category amounts to closing the braid. Obviously, one can choose to close all strands but one, which leaves us with an endomorphism with the object labelling the remaining strand, and then take the trace of that endomorphism. More formally:
\begin{Rem}\label{pretrick}
  Let $V,W\in\C$ and $f\colon V\ot W\to V\ot W$. Then
  $\ptr(f)=\ptr(\ptr_V(f))$, where
  \begin{equation}
    \ptr_V(f)=
    \gbeg36
    \gvac2\got1W\gnl
    \gdb\gcl1\gnl
    \gcl1\glmptb\gnot f\grmptb\gnl
    \gev\gcl1\gnl
    \gvac2\gob1W
    \gend
  \end{equation}
  is a partial (pivotal) trace. If $W$ is simple, then $\ptr_V(f)=\lambda\cdot\id_W$ is a scalar, and $\ptr(f)=\lambda\dim(W)$.
\end{Rem}

Working in the category $\CTR(\Vect_G^\omega)$, where formulas for the traces of braids involve sums over all the combinations of $G$-degrees of each object (subject to some condition), this will allow us to get away with one less sum (or, when coding the formulas, one less nested loop):
\begin{Rem}\label{trick}
  Let $V,W\in\CTR(\Vect_G^\omega)$ with $W$ the simple object corresponding to $(g,\chi)$, and let $f\colon V\ot W\to V\ot W$. Then $\tr_V(f)=\lambda\id$; the scalar $\lambda$ is determined by the component $\tr_V(f)|_{W_g}\colon W_g\to W_g$ of $\tr_V(f)$ by $\lambda\dim W_g=\tr(\tr_V(f)|_{W_g})$, and thus $\tr(f)=|\overline g|\tr(\tr_V(f)|_{W_g})$
\end{Rem}

As an illustration and warm-up for the calculations in \cref{sec:borr-tens-twist} we will consider the $S$-matrix for two simple objects $V,W\in\mathcal{Z}(\Vect^\omega_G)$, corresponding to the pairs $(g,\chi_1)$ and $(h, \chi_2)$. We need to compute
\begin{equation*}
  S_{(g,\chi_1),(h,\chi_2)}=\tr(\sigma_{WV}\sigma_{VW})=\tr(\sigma^2)=\tr(R(\sigma^2)),
\end{equation*}
where (no parentheses being necessary on two objects) there is no difference between the representation $R$ of the braid group $\mathbb B_2$ on one generator $\sigma$ and simply instances of the braiding of the category $\CTR(\Vect_G^\omega)$.

If we write $V = \underset{x\in \overline{g}}\oplus V_x$ and $W = \underset{x\in \overline{h}}\oplus W_y$, then for $  v\in V_x$ and $  w\in W_y$ we have
\begin{align*}
  \braid^2(  v\ot  w)&=\braid(x\hit   w\ot   v)\\
  &=|x {\hit}  w|\hit   v\ot x\hit   w\\
  &=(x\hit y) \hit   v\ot x\hit  w.
\end{align*}
We can endow $V\ot W$ with a $G\times G$-grading composed of the $G$-gradings of $  V$ and $  W$. Then
\begin{align*}
  \deg_{G\times G}\braid^2(v\ot w)=((x\hit y) \hit x,x \hit y)
\end{align*}
for $x=|v|,y=|w|$.

For a finite group $\Gamma$, a $\Gamma$-graded vector space $E$, and an endomorphism $f$ of $E$ let $f_0$ be trivial component of $f$ with respect to the $\Gamma$-grading of $\End(E)$. Then $\tr(f)=\tr(f_0)$.

In our example, considering the $G\times G$-grading of $V\ot W$, we see that $\braid^2( v\ot  w)$ has the same degree as $v\ot w$ if and only if $x$ and $y$ commute.

\begin{equation} \label{degree condition}
(\braid^2)_0(v\ot w) = \begin{cases} 
          x\hit v\ot y\hit  w & [x,y]=1 \\
          0 & [x,y] \neq 1 
       \end{cases}
\end{equation}

In particular
\begin{equation*}
  S_{V,W}=\tr(\braid^2)=\tr((\braid^2)_0)=\sum_{\substack{x\in \overline g\\y\in\overline h\\ [x,y]=e}}\chi_1^x(y)\chi_2^y(x)
\end{equation*}
Using \cref{trick} we can replace the double sum by a single sum; also, we can use~(\cref{chi}):
\begin{align*}
S_{(g, \chi_1),(h, \chi_2)} &= |\hb|\sum_{x \in \gb}^{[x,h]=1} \chi_1^x (h) \chi_2 (x)\notag\\
&=|\hb| \sum_{x \in \gb}^{[x,h]=1} \alpha_{g}(c, p) \alpha_{g}\inv(p, p\inv\hit c)\chi_1(p\inv\hit c) \chi_2 (x)
\end{align*} 
where $p$ stands for any group element satisfying $p\hit g=x$. Alternatively, we can use the bijection $G/C_G(g) \rightarrow g^G$ given by $aC_G(g)\mapsto a\hit g $ to rewrite
\begin{align*}\label{Sgroup}
  S_{(g, \chi_1),(h, \chi_2)} &=\frac{|\hb|}{|C_G(g)|}\sum^{[p\hit g,h]=1}_{p\in G} \alpha_{g}(h, p) \alpha^{-1}_{g}(p, p^{-1}\hit b)\chi_1(p^{-1}\hit b) \chi_2 (p\hit g)\\
  &={|\hb|}\sum^{[p\hit g,h]=1}_{p\in G/C_G(g)} \alpha_{g}(h, p) \alpha^{-1}_{g}(p, p^{-1}\hit b)\chi_1(p^{-1}\hit b) \chi_2 (p\hit g)
\end{align*}
As mentioned, this formula is (up to conventions) quite like the formula in \cite{MR1770077}, except for two details: We have a single sum over one conjugacy class instead of a double sum, and we have half the cocycle (``$\alpha$'') terms due to the fact that we need to use (\cref{chi}) on only one of the two objects.

\section{The Borromean tensor}\label{sec:borromean-tensor}

A modular category (in fact any spherical braided fusion category) defines a numerical invariant of framed knots and links which can be written as the pivotal trace of the image in the category of a braid whose closure is the link, with its components colored by simple objects of the category. Read differently, each fixed framed link defines a numerical invariant of modular categories in this fashion. More precisely, the invariant is then indexed by as many simple objects as the link has components.

Among this infinite supply of numerical invariants (among which the $S$-matrix and, up to a dimension factor, the $T$-matrix can also be found) we pick one example, for the heuristic (and art historical) reasons cited in the introduction:

\begin{Def}
  The borromean tensor (or $B$-tensor) of a modular category with simples $(X_i)_{i\in I}$ is the family
  \begin{equation}
    \label{eq:borromeo}
    B_{ijk}:=\ptr(B((\braid_2\inv\braid_1)^3)
  \end{equation}
  where $B((\braid_2\inv\braid_1)^3)\in\operatorname{Aut}_{\C}((X_i\ot X_j)\ot X_k)$. Graphically
  \begin{equation*}
    B_{ijk}=\ptr\left(
    \gbeg38
    \got1i\got1j\got1k\gnl
    \gbr\gcl1\gnl
    \gcl1\gibr\gnl
    \gbr\gcl1\gnl
    \gcl1\gibr\gnl
    \gbr\gcl1\gnl
    \gcl1\gibr\gnl
    \gob1i\gob1j\gob1k\gend
  \right)
  =
  \gbeg6{13}
  \gwdb6\gnl
  \gcl{10}\gwdb4\gcl2\gnl
  \gvac1\gcl8\gnot{\qquad\; i}\gdb\gnot{\;j}
  \gcl1\gnot{\;k}\gvac1\gnl
  \gvac2\gcl6    \gbr\gcl1\gnl
  \gvac3  \gcl1\gibr\gnl
  \gvac3  \gbr\gcl1\gnl
  \gvac3  \gcl1\gibr\gnl
  \gvac3  \gbr\gcl1\gnl
  \gvac3  \gcl1\gibr\gnl
  \gvac2\gev\gcl1\gcl2\gnl
    \gvac1\gwev4\gnl
    \gwev6
  \gend
\end{equation*}
(In the graphical representation, we let $i$ stand for $X_i$.)
\end{Def}
\begin{Lem}
 The following equalities hold:
\begin{enumerate}
\item
$B_{ijk}=B_{jki}=B_{kij}$
\item
$B_{ijk}=B_{jik^{\ast}}$
\item
$B_{ijk}=\overline{B_{kji}}$ if $\C$ statisfies the $\mathcal F$-property (see \cite{MR2832261}).
\end{enumerate}
\end{Lem}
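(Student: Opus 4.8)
The plan is to read $B_{ijk}$ as the colored invariant of the Borromean rings $L=C_1\cup C_2\cup C_3$, presented as the closure of the pure braid $w=(\braid_2\inv\braid_1)^3$ with components colored by $X_i,X_j,X_k$, and to deduce each identity from a symmetry of $L$ translated into the categorical graphical calculus. The three ingredients I will use repeatedly are: cyclicity of the pivotal trace $\ptr$; the fact that reversing the orientation of a component colored $X_k$ replaces its color by $X_{k^*}$ (duality in the graphical calculus, the framing being insensitive to orientation since self-linking is unchanged when both a curve and its pushoff are reversed); and sphericity, which lets me rotate and flip closed diagrams freely.

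For (1) I would argue purely braid-algebraically. Setting $u=\braid_2\inv\braid_1$, so that $w=u^3$ and hence $u\,w\,u\inv=w$, the permutation underlying $u$ is a $3$-cycle. Using $R(e)=\id$ together with cyclicity of $\ptr$ one verifies the general fact that, for a pure braid $\beta$ and any $\gamma\in\mathbb B_3$, $\ptr(R(\gamma\beta\gamma\inv)_{(V_1,V_2,V_3)})=\ptr(R(\beta)_{\gamma\cdot(V_1,V_2,V_3)})$, i.e. conjugating and closing merely permutes the three colors by the permutation of $\gamma$. Applying this with $\beta=w$ and $\gamma=u$ (so that $\gamma\beta\gamma\inv=\beta$) gives $B_{ijk}=B_{kij}$, and iterating yields the full cyclic symmetry. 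This step is clean and uses neither duals nor the $\mathcal F$-property.

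For (2), conjugation in $\mathbb B_3$ cannot produce the dual $X_{k^*}$, so I would instead invoke the orientation-preserving self-homeomorphism of $S^3$ that exchanges $C_1,C_2$ and reverses the orientation of $C_3$ (such a symmetry exists for the standard symmetric presentation of $L$ and is orientation-preserving, hence needs no conjugation of scalars). Categorically I would realize it as an isotopy of the closed diagram that swaps the first two strands and turns the third strand upside down; by the duality morphisms an upside-down strand colored $X_k$ is a strand colored $X_{k^*}$, and reading off the resulting diagram gives $B_{ijk}=B_{jik^*}$. The delicate point here is the bookkeeping: tracking exactly which crossings and cups/caps the flip produces and confirming (via sphericity and naturality of $\braid$) that the framing is preserved.

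For (3) I would use the mirror image. Inverting every braiding turns $w$ into $\bar w=(\braid_2\braid_1\inv)^3$, whose closure is the mirror of $L$; since the Borromean rings are amphichiral, a reflection of $S^3$ identifies this with $L$ while reversing the order of the components, so that $\ptr(R(\bar w)_{(i,j,k)})=B_{kji}$. The role of the $\mathcal F$-property (see \cite{MR2832261}) is precisely to guarantee that passing to the mirror — inverting all braidings — complex-conjugates the invariant, so that $\ptr(R(\bar w)_{(i,j,k)})=\overline{B_{ijk}}$; combining the two identities gives $B_{ijk}=\overline{B_{kji}}$. I expect this to be the \textbf{main obstacle}: one must pin down the exact content of the $\mathcal F$-property and check that it delivers complex conjugation of this particular closed diagram — including the contribution of the self-framings, where the relevant twist obeys $\theta\mapsto\theta\inv=\overline{\theta}$ — rather than merely conjugating the individual structure constants.
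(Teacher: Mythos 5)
Your proposal is correct and is, in substance, the paper's own argument: all three identities come from symmetries of the Borromean braid closure realized inside the graphical calculus. Part (1) is exactly cyclicity of the trace (your conjugation by $u=\braid_2\inv\braid_1$ is a clean way to phrase it). For (2) and (3) the paper makes your topological symmetries concrete as two diagram moves: conjugating the braid by the half-twist $\braid_1\braid_2\braid_1$ reverses the order of the strands and sends the Borromean word to its inverse read on the colors $(k,j,i)$ (this single identity is your ``order-reversing amphichirality''), and sliding the $k$-strand around the closure with a cup and a cap turns it into a $k^*$-strand, giving the trace of the reversed word on $(k^*,i,j)$; combining the two yields $B_{ijk}=B_{jik^*}$. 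So the ``delicate bookkeeping'' you defer in (2) is not a side issue --- it \emph{is} the proof, and the paper's two-step factorization (half-twist conjugation first, then the duality rotation of a single strand) is the efficient way to carry it out. For (3) the paper also sidesteps your worries about mirrors and framings: the $\mathcal F$-property is used only to say that $R((\braid_2\inv\braid_1)^3)$ has finite order, hence root-of-unity eigenvalues, hence $\ptr$ of its inverse is the complex conjugate of its trace; since the half-twist identity already exhibits $B_{ijk}$ as the trace of the inverse Borromean braid colored by $(k,j,i)$, the conclusion $B_{ijk}=\overline{B_{kji}}$ follows with no appeal to amphichirality or to twist contributions (which in any case vanish here, as each component of the braid closure carries the zero blackboard framing).
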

\begin{proof}
Clearly cyclicity of the trace implies that
the $B$-tensor is invariant with respect to cyclic permutations of its three indices. We need to show the other symmetry properties:

Conjugating with
$\gbeg33 \gbr\gcl1\gnl\gcl1\gbr\gnl\gbr\gcl1\gend$ shows that
\begin{equation}\label{180}
  \tr\left(
    \gbeg38
    \got1i\got1j\got1k\gnl
    \gbr\gcl1\gnl
    \gcl1\gibr\gnl
    \gbr\gcl1\gnl
    \gcl1\gibr\gnl
    \gbr\gcl1\gnl
    \gcl1\gibr\gnl
    \gob1i\gob1j\gob1k\gend
  \right)
  =\tr\left(
    \gbeg38
    \got1k\got1j\got1i\gnl
    \gcl1\gbr\gnl
    \gibr\gcl1\gnl
    \gcl1\gbr\gnl
    \gibr\gcl1\gnl
    \gcl1\gbr\gnl
    \gibr\gcl1\gnl
    \gob1k\gob1j\gob1i\gend
    \right)
\end{equation}

Furthermore, we have

\begin{equation*}
  \gbeg47
  \got1j\got1i\gdb\gnl
  \gcl
  1\gibr\gcl5\gnl
  \gbr\gcl1\gnl
  \gnot{\!\!k\phantom{k}}\gcl1\gibr\gnl
  \gbr\gcl1\gnl
  \gcl1\gibr\gnl
  \gob1i\gob1j\gev
  \gend
  =
  \gbeg47
  \got1j\gvac2\got1i\gnl
  \gcl1\gdb\gcl1\gnl
  \gbr\gbr\gnl
  \gnot{k\phantom{k}}\gcl1\gibr\gcl1\gnl
  \gbr\gbr\gnl
  \gcl1\gev\gcl1\gnl
  \gob1i\gvac2\gob1j\gend
  =
  \gbeg47
  \gdb\got1j\got1i\gnl
  \gcl5\gibr\gcl1\gnl
  \gvac1\gcl1\gbr\gnl
  \gnot{k\phantom{k}}\gvac1\gibr\gcl1\gnl
  \gvac1\gcl1\gbr\gnl
  \gvac1\gibr\gcl1\gnl
  \gev\gob1i\gob1j\gend
\end{equation*}

and thus

\begin{equation*}
  \tr\left(
    \gbeg38
    \got1i\got1j\got1k\gnl
    \gbr\gcl1\gnl
    \gcl1\gibr\gnl
    \gbr\gcl1\gnl
    \gcl1\gibr\gnl
    \gbr\gcl1\gnl
    \gcl1\gibr\gnl
    \gob1i\gob1j\gob1k\gend
  \right)
  =\tr\left(
    \gbeg48
    \got1i\got1j\gdb\gnl
    \gbr\gcl1\gcl6\gnl
    \gcl1\gibr\gnl
    \gbr\gcl1\gnl
    \gnot{\!\!k\phantom k}\gcl1\gibr\gnl
    \gbr\gcl1\gnl
    \gcl1\gibr\gnl
    \gob1i\gob1j\gev\gend
  \right)
  =\tr\left(
    \gbeg48
    \gvac2\got1i\got1j\gnl
    \gdb\gbr\gnl
    \gcl5\gibr\gcl1\gnl
    \gvac1\gcl1\gbr\gnl
    \gnot{\!\!k\phantom k}\gvac1\gibr\gcl1\gnl
    \gvac1\gcl1\gbr\gnl
    \gvac1\gibr\gcl1\gnl
    \gev\got1i\got1j\gend
  \right)
  =\tr\left(
    \gbeg38
    \got1{k^*}\got1i\got1j\gnl
    \gcl1\gbr\gnl
    \gibr\gcl1\gnl
    \gcl1\gbr\gnl
    \gibr\gcl1\gnl
    \gcl1\gbr\gnl
    \gibr\gcl1\gnl
    \gob1{k^*}\gob1i\gob1j\gend
    \right)
\end{equation*}

Combining with \eqref{180} we see that $B_{ijk}=B_{jik^*}$. Together with the cyclic permutation invariance, this yields that $B$ is invariant under any permutation of its indices combined with dualizing an even or odd number of its indices according to the parity of the permutation. If the borromean braid has finite order in the category in question (as is the case for group-theoretical modular categories, see [reference]), then $B_{ijk}$ is also the complex conjugate of the trace of the inverse Borromean braid. But the right hand side of \eqref{180} is that trace, so $B_{ijk}=\overline{B_{kji}}$. So in this case $B$ is invariant under permutation and dualization of its indices, up to conjugation if the number of dualized indices has the opposite parity of the permutation.
\end{proof}

For larger rank categories, these symmetry properties could serve to speed up the computation of the borromean tensor, although, truth be told, we have so far only used them to debug our code.
\section{The borromean tensor of a twisted double}\label{sec:borr-tens-twist}
\newcommand\Pe{P}

In this section we will derive an explicit formula for the borromean tensor in the Drinfeld center of a pointed fusion category, in terms of the group, the cohomological data, and the projective characters parametrizing the simple objects. It should be noted that in principle it is known how to obtain such formulas for the topological invariants defined by braided monoidal categories. Nevertheless it is a rather tedious undertaking to provide them in complete detail.

Consider three simple objects $ U, V, W \in  \mathcal{Z}(Vec^\omega_G)$, parametrized by couples $(g, \chi_1)$, $(h, \chi_2)$ and $(k, \chi_3)$. Take $u\in U_x, v\in V_y,  w\in W_z$, where $x\in \gb, y\in \hb, z\in \kb$.

Looking at the $G^3$-degree of tensors in $(U\ot V)\ot W$ (which is not affected by associativity isomorphisms), we see that for $u\in U_x$, $v\in V_y$, $w\in W_z$ we have
$\deg_{G^3}(R(\braid_2\inv\braid_1))(u\ot v\ot w)=\Pe(x,y,z)$ if we define $\Pe\colon G^3\to G^3$ by $\Pe(x,y,z)=(x\hit y,z,z\inv\hit x)$. Note $\Pe\inv(x,y,z)=(y\hit z,y\hit z\inv\hit x,y)$. Now in order that $((\sigma_2\inv\sigma_1)^3)_0(u\ot v\ot w)\neq 0$, we need to have $\Pe^3(x,y,z)=(x,y,z)$, or equivalently $\Pe^2(x,y,z)=\Pe\inv(x,y,z)$. Comparing
\begin{align*}
  \Pe^2(x,y,z)&=\Pe(x\hit y,z,z\inv\hit x)\\
            &=((x\hit y)\hit z,z\inv\hit x,(z\inv\hit x\inv)\hit x\hit y)\\
            &=((x\hit y)\hit z,z\inv\hit x,[z\inv,x\inv]\hit y)\\
  \Pe\inv(x,y,z)&=(y\hit z,y\hit z\inv\hit x,y)
\end{align*}
we see that:
\begin{align}
  ((\braid_2\inv\braid_1)^3)_0&|_{U_x\ot V_y\ot W_z}\neq 0\notag\\
                            & \Leftrightarrow
                              \begin{cases}(x\hit y)\hit z=y\hit z\\
                                z\inv\hit x=y\hit z\inv\hit x\\
                                [z\inv,x\inv]\hit y=y
                              \end{cases}\notag\\
                            &\Leftrightarrow\label{eq:condition0}
                              \begin{cases}[[y\inv,x],z]=1\\
                                [[z,y],x]=1\\
                                [[z\inv,x\inv],y]=1
                              \end{cases}
\end{align}

We evaluate the morphism $R((\braid_2\inv\braid_1)^3)$ in three steps.
\begin{align*}
  R(\braid_2\inv\braid_1)&((u\ot v)\ot w)\\
                         :&=\Phi(V\ot\sigma\inv)\Phi\inv(\sigma\ot W)((u\ot v)\ot w)\\
  &=\Phi(V\ot\sigma\inv)\Phi\inv((x\hit v\ot u)\ot w)\\
  &=\Phi(V\ot\sigma\inv)(\omega(x\hit y,x,z)(x\hit v\ot(u\ot w))\\
  &=\Phi(\omega(x\hit y,x,z)(x\hit v\ot (w\ot z\inv\bhit u)\\
                         &=\omega\inv(x\hit y,z,z\inv\hit x)\omega(x\hit y,x,z)(x\hit v\ot w)\ot z\inv\bhit u\\
                         &=\psi(x,z\inv\hit x,x\hit y,z)\Psi((u\ot v)\ot w)
 \end{align*}
 with
 \begin{align*}
   \psi(x,x',y,z)&=\omega\inv(y,z,x')\omega(y,x,z)\alpha_x\inv(z,z\inv)\\
              &=\omega\inv(y,z,x')\omega(y,x,z)\alpha_{x'}\inv(z\inv,z)\\
            \Psi(u\ot v\ot w)&=(|u|\hit v\ot w)\ot |w|\inv\hit u
\end{align*}
Further
\begin{multline*}
  R(\braid_2\inv\braid_1)((x\hit v\ot w)\ot z\inv\hit u)\\
  =\psi(x\hit y,(z\inv\hit x)\inv\hit(x\hit y),(x\hit y)\hit z,z\inv\hit x)\Psi((x\hit v\ot w)\ot z\inv\hit u)
  \\ 
  =\psi(x\hit y,y,y\hit z,z\inv\hit x)\Psi((x\hit v\ot w)\ot z\inv\hit u)\\
\end{multline*}
where
\begin{multline*} \Psi((x\hit v\ot w)\ot z\inv\hit u)\\
  =\alpha((z\inv\hit x)\inv, x)((x\hit y)\hit w\ot z\inv\hit u)\ot [z\inv,x\inv]\hit v\\
  \in W_{(x\hit y)\hit z}\ot U_{z\inv\hit x}\ot V_{[z\inv,x\inv]\hit y}=W_{y\hit z}\ot U_{z\inv\hit x}\ot V_y.
\end{multline*}
Further,
\begin{multline*}
  R(\braid_2\inv\braid_1)(((x\hit y)\hit w\ot z\inv\hit u)\ot[z\inv,x\inv]\hit v)\\
  =\psi(y\hit z,y\inv\hit(y\hit z),(y\hit z)\hit(z\inv\hit x),y)\Psi((x\hit y)\hit w\ot z\inv\hit u\ot[z\inv,x\inv]\hit v)\\
  =\psi(y\hit z,z,x,y)\Psi((x\hit y)\hit w\ot z\inv\hit z\ot[z\inv,x\inv]\hit v)\\
\end{multline*}
\begin{multline*}
\Psi((x\hit y)\hit w\ot z\inv\hit z\ot[z\inv,x\inv]\hit v) \\
  =\alpha_x(y\hit z, z\inv)\alpha_z(y, x\hit y)\psi(y\hit z,z,x,y)([y,z]\hit u\ot [z\inv,x\inv]\hit v\ot[y,x]\hit w.
\end{multline*}
Thus
\begin{multline*}
  R((\braid_2\inv\braid_1)^3((u\ot v)\ot w)\\
  =\Omega(x,y,z)[y,z]\hit u\ot[z\inv,x\inv]\hit v\ot [y,x]\hit w
\end{multline*}
with
\begin{multline}\label{omega}
\Omega(x,y,z)=\psi(x,z\inv\hit x,x\hit y,z)\psi(x\hit y,y,y\hit z,z\inv\hit x)\psi(y\hit z,z,x,y)\\
  =\omega(x\hit y,z,z\inv\hit x)\omega\inv(x\hit y,x,z)\alpha_x\inv(z,z\inv)\\
  \omega(y\hit z,z\inv\hit x,y)\omega\inv(y\hit z,x\hit y,z\inv\hit x)\alpha_y\inv(z\inv\hit x\inv,z\inv\hit x)\\
  \omega(x,y,z)\omega\inv(x,y\hit z,y)\alpha_z(y\inv,y)
\end{multline}

We conclude that
\begin{align*}
  B&_{(g,\chi_1),(h,\chi_2),(k,\chi_3)}\\&=\tr(R((\sigma_2\inv\sigma_1)^3))\\
  &=\tr(R((\sigma_2\inv\sigma_1)^3)_0)\\
  &=\sum_{\substack{x\in\ol g,y\in\ol h,z\in\ol k\\~(\cref{eq:condition0})}}\Omega(x,y,z)\chi_1^x([y,z])\chi_2^y([z\inv,x\inv])\chi_3^z([y,x]).
\end{align*}

Using \cref{trick}, we can reduce the three summations in the preceding formula to two.
\begin{align*}
  B&_{(g,\chi_1),(h,\chi_2),(k,\chi_3)}\\
   &=|\ol k|\sum_{\substack{x\in\ol g,y\in\ol h \\~(\cref{eq:condition1})}}\Omega(x,y,k)\chi_1^x([y,k])\chi_2^y([k\inv,x\inv])\chi_3([y,x])\\
   &=|\ol k|\sum_{\substack{x\in\ol g,y\in\ol h \\~(\cref{eq:condition1})}}
  \begin{aligned}[t]
    \Omega&(x,y,k)\alpha_{g} ([y, k], p)\alpha^{-1}_{g}(p, p^{-1} \hit [y, k])\\
    &\alpha_{h}([k^{-1}, x^{-1}],q) \alpha^{-1}_{h} (q, q^{-1} \hit [k^{-1},x^{-1}])\\
    &\chi_1^x([y,k])\chi_2^y([k\inv,x\inv])\chi_3([y,x]).
  \end{aligned}
\end{align*}
with
\begin{equation}\label{eq:condition1}
  \begin{aligned}[c]
    [[k, y], x] &= 1\\
    [[y^{-1}, x],k]&=1
  \end{aligned}
\end{equation}
Finally, we can express the characters $\chi_1^x$, $\chi_2^y$ in terms of $\chi_1,\chi_2$ using (\cref{chi}):
\begin{align}
  B&_{(g,\chi_1),(h,\chi_2),(k,\chi_3)}\notag\\
   &=|\ol k|\sum_{\substack{p\in G/C_G(g)\\q\in G/C_G(g)\\~(\cref{eq:condition3})}}
  \begin{aligned}[t]\Omega(p\hit g&,q\hit h,k)(p\hit\chi_1)([q\hit h,k])\\
    &(q\hit \chi_2)([k\inv,p\hit g\inv])\chi_3([q\hit h,p\hit g])  
  \end{aligned}\\
   &=\frac{|\ol g||\ol h||\ol k|}{|G|^2}\sum_{\substack{p\in G\\q\in G
  \\~(\cref{eq:condition3})}}\label{eq:1}
  \begin{aligned}[t]\Omega(p\hit g&,q\hit h,k)(p\hit\chi_1)([q\hit h,k])\\
    &(q\hit \chi_2)([k\inv,p\hit g\inv])\chi_3([q\hit h,p\hit g])  
  \end{aligned}
\end{align}
with
\begin{equation}
\label{eq:condition3}
\begin{aligned}[c]
 [[k, q\hit h], p\hit g] &= 1\\ 
 [[(q\hit h)^{-1}, p\hit x],k]&=1.
\end{aligned}
\end{equation}
Besides the $\omega$ and $\alpha$ terms already hiding in $\Omega$, the conjugated characters in the last formulae are hiding further $\alpha$ terms from (\cref{chi}):
\begin{align*}
  (p\hit \chi_{1})([q\hit h, k]) &=\begin{aligned}[t]\alpha_{g} ([q\hit h, k], p) \alpha^{-1}_{g}(p, p^{-1} \hit [q\hit h, k])\\\cdot\chi_1(p^{-1} \hit [q\hit h, k])&
  \end{aligned}
  \\
(q\hit \chi_{2})([k^{-1}, p\hit g^{-1}])&=\begin{aligned}[t] \alpha_{h}([k^{-1}, p\hit g^{-1}],q) \alpha^{-1}_{h} (q, q^{-1} \hit [k^{-1}, p\hit g^{-1}])&\\\cdot\chi_2(q^{-1}\hit[k^{-1}, p\hit g^{-1}])&
\end{aligned}
\end{align*}

We have implemented the general formula for the $B$-tensor above in GAP; the codes are in \cref{codes}. For explicit calculations by humans, the abundance of cocycle terms (six $\omega$ terms and three $\alpha$ terms gathered in $\Omega$ plus four $\alpha$ terms hiding in conjugated projective characters) and the tedious commutation conditions on group elements certainly make the formulas somewhat unwieldy. We will now describe particular circumstances where these problems do not occur and the formula simplifies drastically. The special cases will be very useful for the key example we will treat in the following section.

\begin{Prop} Suppose there is an abelian normal subgroup $A\triangleleft G$ such that  $\omega$ is inflated from the quotient $G/A$ , and $g,h\in A$. Then
  \begin{equation}
    \label{eq:simplified}
    B_{(g,\chi_1),(h,\chi_2),(k,\chi_3)}=\vert \overline k \vert \cdot \sum_{\substack{x\in \overline g\\y\in\overline h}}\chi_1(p\inv\hit[y,k])\chi_2(q\inv\hit[k\inv,x\inv]).
  \end{equation}
  where $p\hit g=x,q\hit h=y$ in the sum. Note also that $\chi_1,\chi_2$ are ordinary characters in this case.
\end{Prop}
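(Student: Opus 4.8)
The plan is to start from the two-summation form of the $B$-tensor derived just before \eqref{eq:condition1},
\[
B_{(g,\chi_1),(h,\chi_2),(k,\chi_3)}=|\ol k|\sum_{\substack{x\in\ol g,\ y\in\ol h\\ \eqref{eq:condition1}}}\Omega(x,y,k)\,\chi_1^{x}([y,k])\,\chi_2^{y}([k\inv,x\inv])\,\chi_3([y,x]),
\]
and to show that, under the stated hypotheses, every ingredient except the two surviving characters degenerates to $1$. The structural fact driving the whole computation is that $A\triangleleft G$ together with $g,h\in A$ forces $\ol g,\ol h\subseteq A$; hence throughout the sum $x,y\in A$, and since $A$ is abelian we get $[y,x]=e$ while conjugates and commutators of $A$-elements stay in $A$. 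On the cohomological side, $\omega$ being inflated from $G/A$ (via a normalized representative) means $\omega$ equals $1$ as soon as one of its three arguments lies in $A$.

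First I would prove that the $2$-cocycle $\alpha_g$ of \eqref{alpha} is identically $1$ on $G\times G$ whenever $g\in A$: in the three factors $\omega(a,b,g)$, $\omega\inv(a,b\hit g,b)$ and $\omega((ab)\hit g,a,b)$ the arguments $g$, $b\hit g$ and $(ab)\hit g$ all lie in $A$ by normality, so each factor is trivial. The same holds for $\alpha_h$. This has two payoffs: $\chi_1,\chi_2$ are ordinary (non-projective) characters, and in \eqref{chi} the $\alpha$-prefactors of $\chi_1^{x}=p\hit\chi_1$ and $\chi_2^{y}=q\hit\chi_2$ disappear, leaving $\chi_1^{x}([y,k])=\chi_1(p\inv\hit[y,k])$ and $\chi_2^{y}([k\inv,x\inv])=\chi_2(q\inv\hit[k\inv,x\inv])$.

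The technical heart, and the step I expect to be the most laborious, is checking that $\Omega(x,y,k)=1$ for all $x\in\ol g$, $y\in\ol h$, by going through the nine factors of \eqref{omega}. Using $x,y\in A$ and $A$ abelian one has $x\hit y=y$, the elements $k\inv\hit x$ and $k\inv\hit x\inv$ stay in $A$, and $y\hit k$ maps to $\bar k$ in $G/A$; a quick inspection then shows that each of the six $\omega$-factors has at least one argument in $A$ and so equals $1$. Of the three remaining $2$-cocycle factors, $\alpha_x\inv(k,k\inv)$ and $\alpha_y\inv(k\inv\hit x\inv,k\inv\hit x)$ vanish by the previous paragraph (here $x,y\in A$); the subtle one is $\alpha_k(y\inv,y)$, whose subscript $k$ is \emph{not} in $A$, but it is still $1$ because its arguments $y\inv,y$ lie in $A$, so each $\omega$-factor in its defining expression again carries an $A$-argument. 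Hence $\Omega(x,y,k)=1$ identically.

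Finally I would dispose of the summation constraints and assemble the pieces. With $x,y\in A$ abelian, $[y\inv,x]=e$ and $[k,y]\in A$, so both conditions $[[k,y],x]=1$ and $[[y\inv,x],k]=1$ of \eqref{eq:condition1} hold automatically and the sum becomes unrestricted over $x\in\ol g$, $y\in\ol h$. The third character collapses, $\chi_3([y,x])=\chi_3(e)$, leaving only the constant $\chi_3(e)=\chi_3(1)$ as the sole residual trace of the third object; substituting $\Omega\equiv1$ and the simplified first two characters then reduces the expression to $|\ol k|\,\chi_3(e)\sum_{x\in\ol g,\,y\in\ol h}\chi_1(p\inv\hit[y,k])\,\chi_2(q\inv\hit[k\inv,x\inv])$, which is the asserted formula (up to that dimension constant).
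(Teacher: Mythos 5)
Your proof is correct and follows essentially the same route as the paper's: specialize the two-summation formula, check that every $\omega$-factor and every $\alpha$-factor (including the subtler $\alpha_k(y^{-1},y)$, whose subscript is not in $A$ but whose arguments are) is trivial because some argument or subscript lies in the abelian normal subgroup, and observe that the commutator conditions in \eqref{eq:condition1} hold automatically. You are also right to flag that $\chi_3([y,x])=\chi_3(1)$ contributes the dimension of the third representation as an extra factor, which the stated formula silently drops --- harmless in the application of \cref{sec:Example}, where $\chi_3(1)=1$, but strictly an omission in the Proposition as written.
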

\begin{proof}
  The conditions are tailored to ensure that $\Omega(x,y,z)=1$ in \ref{omega}, since all values of $\omega$ where one argument is conjugate to $g$ or $h$ are trivial. The same holds for the $\alpha$ terms used in (\cref{chi}), since commutators with one element from $A$ also lie in $A$. Also, $[[k,q\hit h],p\hit g]=1$ for all $q,p$ since $q\hit h\in A$, hence $[k,q\hit h]\in A$, and $p\hit g\in A$. Finally $[q\hit h\inv,p\hit g]=1$ since $q\hit h,p\hit g\in A$ which is abelian, and in particular $[[q\hit h\inv,p\hit g],k]=1$.
\end{proof}
\begin{Cor}\label{Cor:verysimplified}
  Assume the hypotheses of the preceding corollary, and in addition that there is a subgroup $Q\subset C_G(k)$ such that $g^Q=\overline g$ and $h^Q=\overline h$. This applies for example when $G=A\semdir Q$ is a semidirect product of abelian groups. Then
  \begin{equation}
    \label{eq:verysimplified}
    B_{(g,\chi_1),(h,\chi_2),(k,\chi_3)}=\frac{\vert \ol k \vert |A|}{|C_Q(g)||C_Q(h)|} \cdot\sum_{q\in Q}\chi_2(q\hit[h,k])\chi_1(q\inv\hit[k\inv,g\inv]).
  \end{equation}
\end{Cor}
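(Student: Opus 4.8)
The plan is to begin from the simplified identity \eqref{eq:simplified} of the preceding proposition and convert its two sums over the classes $\ol g$ and $\ol h$ into a single sum over $Q$, using that $Q$ acts transitively on each class and centralises $k$.

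First I would replace each class-sum by a sum over $Q$. Because $g^Q=\ol g$, the map $Q\to\ol g,\ p\mapsto p\hit g$ is onto with every fibre a coset of $C_Q(g)=Q\cap C_G(g)$, so $\sum_{x\in\ol g}f(x)=|C_Q(g)|\inv\sum_{p\in Q}f(p\hit g)$, and similarly for $h$. Since the summand in \eqref{eq:simplified} depends on $x,y$ only (the value $\chi_1(p\inv\hit[y,k])$ being independent of the choice of $p$ with $p\hit g=x$, as recorded after \eqref{chi}), I may take the representatives $p,q$ to lie in $Q$ and obtain
\begin{equation*}
  B_{(g,\chi_1),(h,\chi_2),(k,\chi_3)}=\frac{|\ol k|}{|C_Q(g)|\,|C_Q(h)|}\sum_{p,q\in Q}\chi_1(p\inv\hit[q\hit h,k])\,\chi_2(q\inv\hit[k\inv,(p\hit g)\inv]).
\end{equation*}

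Next I would exploit $Q\subseteq C_G(k)$ to extract the conjugations from the commutators. As $p,q$ commute with $k$ and $k\inv$, one has $[q\hit h,k]=q\hit[h,k]$ and $[k\inv,(p\hit g)\inv]=[k\inv,p\hit g\inv]=p\hit[k\inv,g\inv]$, so the two character arguments become $(p\inv q)\hit[h,k]$ and $(q\inv p)\hit[k\inv,g\inv]$; in particular the summand depends on the pair $(p,q)$ only through $r:=p\inv q$. Collapsing the double sum is then the last step: for each $r\in Q$ there are exactly $|Q|$ pairs with $p\inv q=r$, whence the double sum equals $|Q|\sum_{r\in Q}\chi_1(r\hit[h,k])\,\chi_2(r\inv\hit[k\inv,g\inv])$, a single sum over $Q$ of the required shape.

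The commutator manipulations and the verification that the cohomological data disappears are routine: under the proposition's hypotheses $\Omega\equiv 1$, the $\alpha$-factors from \eqref{chi} are trivial, and the selection conditions \eqref{eq:condition1} hold automatically, so no $\omega$ or $\alpha$ terms survive. The real obstacle is bookkeeping the overall scalar together with the pairing of characters and commutators: one must combine the fibre sizes $|C_Q(g)|,|C_Q(h)|$ from the first step against the multiplicity $|Q|$ produced by the collapse, and track carefully, through the substitution $r=p\inv q$, which projective character ends up evaluated on $[h,k]$ and which on $[k\inv,g\inv]$. I would pin this down on the model case $G=A\semdir Q$, where $C_Q(g)=C_Q(h)=1$ and $|\ol k|=|A|$, to fix the constant and the indexing unambiguously.
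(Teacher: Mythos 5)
Your proposal is correct and follows essentially the same route as the paper: the orbit--stabilizer bijections $Q/C_Q(g)\to\ol g$ and $Q/C_Q(h)\to\ol h$ turn the class sums of \eqref{eq:simplified} into sums over $Q$ weighted by $1/(|C_Q(g)||C_Q(h)|)$, the hypothesis $Q\subseteq C_G(k)$ pulls the conjugations out of the commutators so that the summand depends only on $p\inv q$, and the double sum collapses to $|Q|$ times a single sum. The formula you land on, $\frac{|\ol k|\,|Q|}{|C_Q(g)||C_Q(h)|}\sum_{r\in Q}\chi_1(r\hit[h,k])\,\chi_2(r\inv\hit[k\inv,g\inv])$, is exactly the last line of the paper's own computation after reparametrization; the printed statement \eqref{eq:verysimplified} has $|A|$ where $|Q|$ should stand (and interchanges which character is evaluated on which commutator, immaterial in the later application since there $\chi_1=\chi_2$), a typo confirmed by the prefactor $pq=|\ol k|\cdot|Q|$ used for $\ZZ/q\ZZ\semdir\ZZ/p\ZZ$ in \cref{BProp}.
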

\begin{proof}
  We have bijections $Q/C_Q(g)\ni p\mapsto p\hit g\in \ol g$ and $Q/C_Q(h)\ni q\mapsto q\hit h\in \ol h$, and therefore, abbreviating $N=|C_Q(g)||C_Q(h)|$,
  \begin{align*}
    B&_{(g,\chi_1),(h,\chi_2),(k,\chi_3)}\\&=\frac{\vert\ol k  \vert}N \cdot \sum_{p,q\in Q}\chi_1(p\inv\hit[q\hit h,k])\chi_2(q\inv\hit[k\inv,p\hit g\inv])\\
                                        &=\frac{\vert\ol k  \vert}N \cdot\sum_{p,q\in Q}\chi_1(p\inv q\hit[h,q\inv\hit k])\chi_2(q\inv p\hit[p\inv \hit k\inv,g\inv])\\
                                        &=\frac{\vert\ol k  \vert}N\cdot\sum_{p,q\in Q}\chi_1(p\inv q\hit[h,k])\chi_2(q\inv p\hit[k\inv,g\inv])
  \end{align*}
  which gives the desired result after reparametrization.
\end{proof}

\section{Twisted doubles of nonabelian groups of order $pq$}\label{sec:Example}
Let $p,q$ be odd primes with $p|q-1$. There is a unique nonabelian group of order $pq$, and there are exactly $p$ inequivalent twisted doubles of that group; see \cite{2017arXiv170802796M} and below. However, these $p$ inequivalent modular categories only afford three different sets of modular data. In this section we will show that the $T$-matrix and the borromean tensor do distinguish the $p$ modular categories. We first tested this for the case $p=5$ and $q=11$ using our GAP-implementation of~(\cref{eq:1}). That is, we computed the values of $T$-matrix, $S$-matrix, and $B$-tensor in this case, and verified that no bijection between the simples of two distinct categories maps all three data to each other. Closer inspection of the experimental data also helped us pick out the particular simples to use in our borromean tensor calculations below. Thus, although no computer help is necessary in the end to prove our results, machine calculations were instrumental in  our finding them. Note that the $S$-matrix turns out not to be necessary in our example in the end; the $T$-matrix and $B$-tensor suffice.

The nonabelian group of order $pq$ where $p$ and $q$ are odd primes such that $p | q-1$ has the following presentation:
$$
\ZZ/q\ZZ \semdir \ZZ/p\ZZ \cong \langle a,b | a^q=b^p=1 ,\ bab^{-1}=a^n \rangle
$$
The integer $n \in \ZZ/q\ZZ$ must be chosen such that $n \not\equiv 1 \mod q$ and $n^p \equiv 1 \mod q$, but the group does not depend on that choice.
We note
\begin{align}\label{eq:pqformulas}
  b^k a^l b^{-k} &= a^{n^k l} &
  [a^l,b^k]&=a^{l(1-n^k)}&
  [b^k,a^l]&=a^{l(n^k-1)}
\end{align}

The canonical surjection $\ZZ/q\ZZ \semdir \ZZ/p\ZZ\to \langle b\rangle\cong\ZZ/p\ZZ$ induces an isomorphism between the cohomology groups $H^3(\langle b\rangle ,\CCu)\to H^3(\ZZ/q\ZZ \semdir \ZZ/p\ZZ,\CCu)$, where $H^3(\langle b\rangle ,\CCu)\cong \ZZ/p\ZZ$ is generated by the following cocycle $\omega$:
$$
\omega(b^i,b^j,b^k):=\exp\left( \frac{2i\pi}{p^2}([i]([j]+[k]-[j+k])\right)
$$
where $[i]\in\{0,\dots,p-1\}$ is such that $[i]\equiv i\mod p$.

The following is a complete list of representatives for the isomorphism classes of simples of $\CTR(\Vect_G^{\omega^u})$:
\begin{enumerate}
\item
$(1,\chi)$ where $\chi$ is an irreducible character of $G$,
\item
$(a^l,\chi_q^s)$ where $l\in (\ZZ/q\ZZ)^\times/\langle n\rangle$, $s\in\ZZ/q\ZZ$,
and $\chi_q$ is the generator of $\widehat{\langle a \rangle}\cong \ZZ/q\ZZ$ given by
$$
\chi_q(a)=\exp\left(\frac{2i\pi}{q}\right),
$$ 
\item
$(b^k,\widetilde{\chi_p^r})$ where $k\in(\ZZ/p\ZZ)^\times$, $r\in \ZZ/p\ZZ$, and $\widetilde{\chi_p^r}$ is the $\alpha_{b^k}^u$-projective character of $C_G(b^k)=\widehat{\langle b \rangle}$ associated to $\chi_p^r$, $\chi_p$ being the generator of $\widehat{\langle b \rangle}\cong \ZZ/p\ZZ$ given by
$$
\chi_p(b)=\exp\left(\frac{2i\pi}{p}\right).
$$ 
That is $\widetilde{\chi_p^r}=\chi_p^r \mu_{b^k}^u$ where $\alpha_{b^k}^u=d \mu_{b^k}^u$. In the sequel, we will write the simple as $(b^k,\chi_p^r)$ instead of $(b^k,\widetilde{\chi_p^r})$.
\end{enumerate}
For simplicity, we will refer to these as simples of type one, type two or type three.
\begin{Lem}\label{TLem}
The $T$-matrix of $\CTR(\Vect_G^{\omega^u})$ is given by the following:
\begin{enumerate}
\item
$\Theta(1,\chi)=1$
\item
$\Theta(a^l,\chi_q^s)=\exp\left(\frac{2i\pi}{q}sl\right)$
\item
$\Theta(b^k,\chi_p^r)=\exp\left(\frac{2i\pi}{p^2}(pkr+k^2u)\right)$, and in particular
\item $\Theta(b^k,\chi_p^r)^p=\zeta_p^{k^2u}$ for $\zeta_p=\exp(\frac{2i\pi}{p})$.
\end{enumerate}
\end{Lem}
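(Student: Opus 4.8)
The plan is to evaluate the general twist formula \eqref{eq:tmatrix}, $\Theta(g,\chi)=\chi(g)/\chi(1)$, on each of the three families of simples. Families (1) and (2) should fall out immediately once one checks that the relevant projective character is in fact an ordinary character. For (1) we have $g=1$, and $\alpha_1$ is the trivial cocycle by normalisation of $\omega^u$, so $\Theta(1,\chi)=\chi(1)/\chi(1)=1$. For (2) the element $g=a^l$ lies in the abelian normal subgroup $A=\langle a\rangle$, with centraliser $C_G(a^l)=\langle a\rangle=A$; since $\omega^u$ is inflated along $G\twoheadrightarrow G/A\cong\langle b\rangle$, every value of $\omega^u$ occurring in the formula \eqref{alpha} for $\alpha_{a^l}|_{A}$ has all three arguments in $A$ and is therefore $1$. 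Hence $\alpha_{a^l}|_A$ is trivial, $\chi_q^s$ is a genuine character, and $\Theta(a^l,\chi_q^s)=\chi_q^s(a^l)=\exp(2\pi i sl/q)$.

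The substance of the lemma is family (3), where $g=b^k$ with $k\in\ZZ/p\ZZ^\times$, $C_G(b^k)=\langle b\rangle\cong\ZZ/p\ZZ$, and the label $\widetilde{\chi_p^r}=\chi_p^r\,\mu_{b^k}^u$ is genuinely projective, $\mu_{b^k}^u$ being a $1$-cochain trivialising $\alpha_{b^k}^u$ on $\langle b\rangle$ (which exists because $H^2(\ZZ/p\ZZ,\CCu)=0$). I would first compute $\alpha_{b^k}^u$ on $\langle b\rangle$ from \eqref{alpha}. Since $\langle b\rangle$ is abelian the conjugation actions are trivial, so the three factors are $\omega^u(b^i,b^j,b^k)$, $\omega^u(b^i,b^k,b^j)^{-1}$ and $\omega^u(b^k,b^i,b^j)$; the explicit cocycle is symmetric in its last two arguments, so the first two factors cancel, leaving the pleasantly simple
\[
  \alpha_{b^k}^u(b^i,b^j)=\omega^u(b^k,b^i,b^j)=\exp\!\Bigl(\tfrac{2\pi i\,u}{p^2}\,[k]\,([i]+[j]-[i+j])\Bigr).
\]

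Because $[i]+[j]-[i+j]\in\{0,p\}$ is precisely the mod-$p$ carry, the next step is to guess and verify the explicit trivialisation $\mu_{b^k}^u(b^i)=\exp\bigl(\tfrac{2\pi i\,u}{p^2}[k][i]\bigr)$; a one-line computation gives $(d\mu_{b^k}^u)(b^i,b^j)=\exp\bigl(\tfrac{2\pi i\,u}{p^2}[k]([i]+[j]-[i+j])\bigr)=\alpha_{b^k}^u(b^i,b^j)$. Evaluating at $b^k$ yields $\mu_{b^k}^u(b^k)=\exp(2\pi i\,u[k]^2/p^2)$, and combining with $\chi_p^r(b^k)=\exp(2\pi i rk/p)=\exp(2\pi i\,prk/p^2)$ gives $\Theta(b^k,\chi_p^r)=\exp\bigl(\tfrac{2\pi i}{p^2}(pkr+k^2u)\bigr)$, which is (3) once $k$ is read as its representative $[k]$. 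Part (4) is then a one-line consequence: raising to the $p$-th power turns the $pkr$ term into $\exp(2\pi i kr)=1$ and leaves $\Theta(b^k,\chi_p^r)^p=\exp(2\pi i\,k^2u/p)=\zeta_p^{k^2u}$.

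The only genuine obstacle is family (3), and within it the care needed around the trivialising cochain. The cochain $\mu_{b^k}^u$ is unique only up to a homomorphism $\langle b\rangle\to\CCu$, and multiplying it by such a character merely reindexes $r$; one must therefore confirm that the explicit quadratic choice above is the one used in the parametrisation of the simples, and note that it is $[k]^2$ --- not an arbitrary integer lift of $k^2$ --- that appears, so that $k$ must be read as its representative in $\{1,\dots,p-1\}$ for the stated formula to hold on the nose. The symmetry-driven cancellation of the two $\omega^u$ factors and the carry identity making the quadratic cochain work are the computational heart; the rest is bookkeeping with the explicit cocycle $\omega$.
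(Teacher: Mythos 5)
Your proposal is correct and follows essentially the same route as the paper: evaluate $\Theta(g,\chi)=\chi(g)/\chi(1)$ case by case, note that $\alpha_{a^l}^u$ is trivial for the second family, and for the third family use exactly the trivialising cochain $\mu_{b^k}^u(b^i)=\exp\bigl(\tfrac{2\pi i u}{p^2}k[i]\bigr)$ that the paper builds into its parametrisation of the simples. The only difference is that you spell out the verification that $d\mu_{b^k}^u=\alpha_{b^k}^u$ (via the symmetry cancellation and the carry identity), which the paper leaves implicit.
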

\begin{proof}
For simples in $\CTR(\Vect_G^{\omega^u})$ of the first type, it is obvious that the corresponding twist $\Theta(1,\chi)=\frac{\chi(1)}{\chi(1)}=1$. For simples of the second type $(a^l,\chi_q^s)$, one has $\alpha_{a^l}^u=1$, and then the irreducible projective characters of the centralizer $\langle a \rangle \cong \ZZ/q\ZZ$ of $a^l$ are the usual irreducible characters. The twist is therefore given by 
$$
\Theta(a^l,\chi_q^r)=\frac{\chi_q^s(a^l)}{\chi_q^s(1)}=\exp\left(\frac{2i\pi}{q}sl\right)
$$
Finally, for simples of the last type $(b^k,\chi_p^r)$, one has $\alpha_{b^k}^u=d \mu_{b^k}^u$ where
$$
\mu_{b^k}^u(x)=\exp\left(\frac{2i\pi u}{p^2}k[\Pi(x)]\right)
$$
and $\alpha_{b^k}^u$-projective characters of the centralizer $\langle b \rangle \cong \ZZ/q\ZZ$ of $b^k$ are usual irreducible characters twisted by $\mu_{b^k}^u$. Therefore:
$$
\Theta(b^k,\chi_p^r)=\frac{\chi_q^r(b^k)\mu_{b^k}^u(b^k)}{\chi_q^r(1)\mu_{b^k}^u(1)}=\exp\left(\frac{2i\pi}{p^2}(pkr+k^2u)\right)
$$
\end{proof}

\begin{Lem}\label{BProp} The $B$-tensor of the category $\CTR(\Vect_{\ZZ/q\ZZ \semdir \ZZ/p\ZZ}^{\omega^u})$ with $p$ and $q$ odd primes such that $q|p-1$ and $u\in\{0,\dots p-1\}$ satisfies
\begin{equation}\label{bformula.aab}
B_{(a^l,\chi_q^s),(a^l, \chi_q^s),(b^{k}, \chi_p^r)}=p q \sum_{m=0}^{p-1} e^{\left(\frac{2i\pi sl}{q} (n^{-t}-n^t)(n^m-n^{-m})\right)};\quad 2t\equiv k(p) 
\end{equation}
\end{Lem}

\begin{proof}
We apply \cref{Cor:verysimplified} with $Q=\langle b\rangle$ and $A=\langle a\rangle$.
By \cref{eq:verysimplified} and \cref{eq:pqformulas} we have
\begin{align*}
  B_{(a^l,\chi_q^s),(a^l, \chi_q^s),(b^k, \chi_p^r)} &=pq\sum_{m=0}^{p-1}\chi_q^s(b^m \hit[a^l,b^k])\chi_q^s(b^{-m} \hit[b^{-k},a^{-l}] )\\
  &=p q \sum_{m=0}^{p-1} \chi_q^s(a^{l \cdot n^m (1-n^k)})\chi_q^s(a^{-l \cdot n^{-m} (n^{-k}-1)})\\
&= p q \sum_{m=0}^{p-1} e^{\left(\frac{2i\pi sl}{q} (n^{m}(1-n^k)+n^{-m}(1-n^{-k})\right)}.
\end{align*}
For $2t\equiv k(p)$ we get
\begin{align*}
  n^{m}(1-n^k)+n^{-m}(1-n^{-k})&\equiv n^m(1-n^{2t})+n^{-m}(1-n^{-2t})\\&=(n^{m+t}-n^{-(m+t)})(n^{-t}-n^t)
\end{align*}
so reparametrization gives the desired expression.
\end{proof}

In \cite{2017arXiv170802796M} it was shown that the $p$ non-equivalent modular tensor categories  $\CTR(\Vect_{\ZZ/q\ZZ \semdir \ZZ/p\ZZ}^{\omega^u})$ for $u=0,..,p-1$, are not distinguished by their modular data. In fact there are only three different modular data between these categories. As we will see shortly, this is changed when we add the $B$-tensor to the data. 

The fact that the $B$-tensor should help may seem surprising given that the expression \eqref{bformula.aab} does not depend on $u$ at all. At least in the case $p=5, q=11$ this is not only true for the particular coefficients of the $B$-tensor that we calculated above. If we label simple objects by $\chi$, resp.\ $(l,s)$, resp.\ $(k,r)$ as above, then the entire $B$-tensor is independent of $u$, as we found by computer calculation. However, in this labelling, the $T$-matrices do depend on $u$, and for different $u$ we cannot relabel so that both $B$ and $T$ agree. We will inspect this closer for some low order examples.

In Tables 1, 2 and 3, we have listed, in the cases $(p,q,n)=(5,11,4)$, $(5,31,2)$, $(7,29, 7)$, the rescaled $B$-tensor values $$\beta_{ls}(k):=\frac{1}{pq}B_{(a^l,\chi_q^s),(a^l, \chi_q^s),(b^k, \chi_p^r)}$$ for a fixed value of the product $ls$ (they do not depend on $r$, and only half are listed since $\beta_{ls}(p-k)=\beta_{ls}(k)$). We have set $\zeta=\exp\left(\frac{2\pi isl}{q}\right)$, which is a primitive $q$-th root of unity if $ls\not\equiv 0\mod q$.
\begin{table}
\caption{$p=5, q=11, n=4$}
\begin{tabular}{|l|l|l|}
\hline
$k$ & $t$ & $\beta_{ls}(k)$ \\ \hline 
$1$ & $3$ &  $1+ \zeta^4 + \zeta^6 +  \zeta^5 +  \zeta^7$  \\ \hline
$2$ & $1$ & $1 + \zeta + \zeta^7 + \zeta^4 + \zeta^{10}$ \\ \hline
\end{tabular}
\end{table}

\begin{table}
\caption{$p=5, q=31, n=2$}
\begin{tabular}{|l|l|l|}
\hline
$k$ & $t$ & $\beta_{ls}(k)$ \\ \hline 
$1$ & $3$ &  $1+ \zeta^6 + \zeta^{15} +  \zeta^{16} +  \zeta^{25}$  \\ \hline
$2$ & $1$ & $1 + \zeta^{10} + \zeta^{25} + \zeta^6 + \zeta^{21}$ \\ \hline
\end{tabular}
\end{table}

\begin{table}
\caption{$p=7, q=29, n=7$}
\begin{tabular}{|l|l|l|}
\hline
$k$ & $t$ &$ \beta_{ls}(k)$ \\ \hline
$1$ & $3$ &  $1 + \zeta^4 + \zeta^{11} +  \zeta +  \zeta^{28} +\zeta^{25} +\zeta^{18}$  \\ \hline
$2$ & $1$ & $1 + \zeta^{24} + \zeta^{14} + \zeta^{18} + \zeta^{11} +\zeta^{15} +\zeta^5$ \\ \hline
$3$ & $4$ & $1 + \zeta^{15} + \zeta^4 +\zeta^{16} + \zeta^{25} + \zeta^{14} + \zeta^{13}$\\  \hline
\end{tabular}
\end{table}

We see that in the examples (and, it will turn out below, in general) the value of $k^2$ is determined by the entry of the $B$-tensor. A bijection between simples of categories for different values of $u$ that preserves the $T$-matrix would have to fix $ls$, and it would need to nontrivially permute the values of $k$ to compensate the difference in $u$, a contradiction. Details are in the proof of the following
\begin{Thm}\label{mainthm}
The $T$-matrix and $B$-tensor form a complete set of invariants for the $p$ non-equivalent modular categories $\CTR(\Vect_{\ZZ/q\ZZ \semdir \ZZ/p\ZZ}^{\omega^u})$ where $p$ and $q$ are odd primes such that $p|q-1$. More precisely, if there is a map $\kappa$ from the simple objects of $\CTR(\Vect_{\ZZ/q\ZZ \semdir \ZZ/p\ZZ}^{\omega^u})$ to the simple objects of $\CTR(\Vect_{\ZZ/q\ZZ \semdir \ZZ/p\ZZ}^{\omega^{u'}})$ satisfying $T_{\kappa(V)}=T_{V}$ and $B_{\kappa(U),\kappa(V),\kappa(W)}=B_{UVW}$ for all simples $U,V,W$ of the former, then $u=u'$.
\end{Thm}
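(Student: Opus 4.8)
The plan is to read the parameter $u$ off the combined $T$- and $B$-data in a way that is insensitive to relabelling, using \cref{TLem} and \cref{BProp}. I first dispose of the degenerate value and then set up the type-preservation I will need. By \cref{TLem} the $T$-value of a simple is a root of unity whose order is $1$ for a simple of type (1), divides $q$ for type (2), and divides $p^2$ for type (3); moreover for type (3) the order is exactly $p^2$ precisely when $p\nmid u$, since $k\in(\ZZ/p\ZZ)^\times$. Hence a simple with $T$-order divisible by $p^2$ exists if and only if $u\neq 0$, and as $\kappa$ sends each $T$-value to an equal one (injectivity of $\kappa$ is not used anywhere), we get $u=0\iff u'=0$. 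The case $u=u'=0$ is then settled, so I may assume $u,u'\in\{1,\dots,p-1\}$. In that regime every type (3) simple has $T$-order exactly $p^2$, which is the only order not dividing $q$, and a type (2) simple with $sl\not\equiv 0\pmod q$ has $T$-order exactly $q$, an order attained only by type (2) simples. Consequently $\kappa$ carries type (3) simples to type (3) simples and type (2) simples with $sl\not\equiv0$ to type (2) simples.

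The heart of the argument is to extract $k$ up to sign from the $B$-tensor. Fix a type (3) simple $W=(b^k,\chi_p^r)$ and write $\kappa(W)=(b^{k'},\chi_p^{r'})$. For a type (2) simple $V=(a^l,\chi_q^s)$ we have $\kappa(V)=(a^{l'},\chi_q^{s'})$ with $s'l'\equiv sl\pmod q$, since $\kappa$ preserves $T$ and $\Theta(a^l,\chi_q^s)=\exp(2i\pi sl/q)$. Writing $\zeta_q=\exp(2i\pi/q)$, $d_k=n^{-t}-n^t$ with $2t\equiv k\pmod p$, and $e_m=n^m-n^{-m}$, the equality $B_{V,V,W}=B_{\kappa V,\kappa V,\kappa W}$ combined with \cref{BProp} (applied in each category, the group and hence $n$ being the same) yields
$$\sum_{m=0}^{p-1}\zeta_q^{\,c\,d_k e_m}=\sum_{m=0}^{p-1}\zeta_q^{\,c\,d_{k'}e_m},\qquad c=sl.$$
As $s$ ranges over $\ZZ/q\ZZ$ with $l$ a fixed nonzero representative, $c$ runs through all of $\ZZ/q\ZZ$, so this holds for every $c$; by linear independence of the additive characters $c\mapsto\zeta_q^{\,ca}$ the multisets $\{d_k e_m\}_m$ and $\{d_{k'}e_m\}_m$ in $\ZZ/q\ZZ$ coincide. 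Comparing second moments and using $\sum_{m}e_m^2=-2p\not\equiv0\pmod q$ (valid since $q>p$) gives $d_k^2=d_{k'}^2$. As $d_k^2=n^k+n^{-k}-2$, this reads $n^k+n^{-k}=n^{k'}+n^{-k'}$, and since $x\mapsto x+x\inv$ separates $\langle n\rangle$ only up to inversion while $-1\notin\langle n\rangle$ (which has odd order $p$), I conclude $k'\equiv\pm k\pmod p$, hence $k'^2\equiv k^2\pmod p$. Feeding this back into $T$ on $W$, preservation gives $pkr+k^2u\equiv pk'r'+k'^2u'\pmod{p^2}$; reducing modulo $p$ and using $k'^2\equiv k^2$ leaves $k^2u\equiv k^2u'\pmod p$, and cancelling the unit $k^2$ yields $u\equiv u'\pmod p$, i.e. $u=u'$.

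I expect the middle step to be the main obstacle. The $B$-value of \cref{BProp} is completely blind to both $r$ and $u$, so no information about $u$ can come from the $B$-tensor at all: every bit of it must be supplied by $T$, whose type (3) values unfortunately entangle the wanted quantity $k^2u$ with the ``noise'' $pkr$ and are only defined modulo $p^2$. The sole function of $B$ is therefore to rigidify which $k'$ can be the image of $k$, and the difficulty is to separate, inside a single scalar $B_{V,V,W}$, its dependence on the type (2) datum $sl$ from its dependence on the type (3) datum $k$. The Fourier/linear-independence argument over $\ZZ/q\ZZ$ accomplishes exactly this, and the clean arithmetic fact that $n^j+n^{-j}$ determines $j$ up to sign (for $j$ coprime to $p$) is what ultimately pins down $k^2\bmod p$ and lets the $T$-matrix deliver $u=u'$.
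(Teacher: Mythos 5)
Your proof is correct and follows essentially the same route as the paper's: reduce to $u,u'\neq 0$ via $T$, extract the multiset equality $\{d_k e_m\}=\{d_{k'}e_m\}$ from the $B$-values $B_{V,V,W}$, compare second moments using $\sum_m e_m^2=-2p$ to get $k'^2\equiv k^2\pmod p$, and feed this back into the $T$-value of $(b^k,\chi_p^r)$ to conclude $u=u'$. The only (harmless) variations are that you justify the multiset equality by linear independence of additive characters as $c=sl$ ranges over $\ZZ/q\ZZ$, where the paper uses a single nonzero $c$ and the fact that fewer than $q$ powers of a primitive $q$-th root of unity admit no nontrivial rational relation, and that you spell out the type-preservation of $\kappa$ which the paper leaves implicit.
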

\begin{proof}
  Let $\kappa$ be such a map. We will use the notations $(g,\chi)_u$ and $(g,\chi)_{u'}$ to denote simple objects of the two categories under consideration. The category corresponding to the trivial cocycle is easily distinguished from the others by the $T$-matrix alone, so we can assume $u,u'\neq 0$. Let $ls\not\equiv 0\mod q$. Then it is obvious from \cref{TLem} that $\kappa((a^l,\chi_q^s)_u)=(a^{l'},\chi_q^{s'})_{u'}$ with $sl \equiv s'l' \mod q$. Also, part (4) of \ref{TLem} implies that $\kappa((b^k,\chi_p^r)_u)=(b^{k'},\chi_p^{r'})_{u'}$ for some $k'$ and $r'$ with $k'^2u'\equiv k^2u\mod p$.

Next,
\begin{align*}
B_{(a^l,\chi_q^s)_u,(a^l, \chi_q^s)_u,(b^k, \chi_p^r)_u} &= B_{\kappa((a^l,\chi_q^s)_u),\kappa((a^l, \chi_q^s)_u),\kappa((b^k, \chi_p^r))}\\
&= B_{(a^{l'},\chi_q^{s'})_{u'},(a^{l'},\chi_q^{s'})_{u'},(b^{k'}, \chi_p^{r'})_{u'}}
\end{align*}
So, with \cref{BProp} and $2t\equiv k\mod p, 2t'\equiv k'\mod p$, we get
\begin{equation}\label{sumroots}
\sum_{m=0}^{p-1} \left(e^{\frac{2i\pi sl} {q} }\right)^{(n^t-n^{-t})(n^{m}-n^{-m})} = \sum_{m=0}^{p-1} \left(e^{\frac{2i\pi s'l'} {q} }\right)^{(n^{t'}-n^{-t'})(n^{m}-n^{-m})}.
\end{equation}
Since this is a $\mathbb Q$-linear relation between fewer than $q$ powers of the same primitive $q$-th root of unity $e^{\frac{2i\pi sl} {q} }=e^{\frac{2i\pi s'l'} {q}}$, we conclude that the set 
\begin{equation*}
  M_t:=\{(n^t-n^{-t})(n^{m}-n^{-m})|m=0,\dots,p-1\}\subset\ZZ/q\ZZ
\end{equation*}
is equal to the analogous set $M_{t'}$. We note that the $p$ elements $n^m-n^{-m}$ are distinct: Indeed, assume $n^m-n^{-m}=n^j-n^{-j}$ for $0\leq m,j\leq p-1$. Then
  \begin{equation*}
    0=n^m-n^j+n^{-j}-n^{-m}=(n^{-j-m}+1)(n^m-n^j).
  \end{equation*}
 Now $n^{-j-m}+1\neq 0$ since $n$ has odd order, and so $n^m=n^j$ which implies $m=j$. In particular
\begin{align*}
  \sum_{x\in M_t}x^2&=\sum_{m=0}^{p-1}(n^{t}-n^{-t})^2(n^{2m}+n^{-2m}-2)\\
                  &=(n^{t}-n^{-t})^2\left(\sum_{m=0}^{p-1}n^{2m}+\sum_{m=0}^{p-1}n^{-2m}-2p\right)\\
  &=-2p(n^{t}-n^{-t})^2,
\end{align*}
since $n^2$ and $n^{-2}$ are primitive $p$-th roots of unity in $\ZZ/q\ZZ$
and so the sum over all their powers gives zero. By the same reasoning
\begin{equation*}
  \sum_{x\in M_t}x^2=\sum_{x\in M_{t'}}x^2=-2p(n^{t'}-n^{-t'})^2
\end{equation*}
Thus $(n^t-n^{-t})^2=(n^{t'}-n^{-t'})^2$, which implies $n^t-n^{-t}=n^{t'}-n^{-t'}$ or $n^t-n^{-t}=n^{-t'}-n^{t'}$. As we have seen, this implies $t\equiv t'$ or $t\equiv -t'$ in $\ZZ/p\ZZ$. Therefore $k^2=k'^2$ and we can conclude that $u=u'$.
\end{proof}

\begin{Rem}
  There is an action of the absolute Galois group of abelian extensions  of the rationals $\Gamma:=\Gal(\Q^\ab/\Q)$ on the set of simples of any integral modular category; see \cite[Appendix]{MR2183279}. In the case of $\CTR(\Vect_G^\omega)$ it satisfies $S_{\gamma(i),\gamma(j)}=\gamma^2(B_{i,j})$ for any $\gamma\in\Gamma$, and also $T_{\gamma(i),\gamma(i)}=\gamma^2(T_{ii})$ as shown in  \cite{MR3435813}. If we analyze the proof in \cite{2017arXiv170802796M} that the twisted doubles of nonabelian groups of order $pq$ share only three different sets of modular data, we can conclude from our result that the analogous property $B_{\gamma(i),\gamma(j),\gamma(k)}=\gamma^2(B_{ijk})$ is not satisfied. 
\end{Rem}

\appendix

\section{GAP codes}\label{codes}
We give in this appendix the codes that we used to compute the $S$-matrix and the Borromean tensor for the categories $\CTR(\Vect_G^\omega)$. Preliminary codes that compute complex valued group cohomology, projective characters of finite groups, etc., as well as the code that computes the $T$-matrix, are the ones of \cite{2017arXiv170806538M}. The function \lstinline!ZwG_S! computes the $S$-matrix and the function \lstinline!ZwG_B! computes the Borromean tensor; both those functions are taking as arguments a finite group $G$, a $3$-cocycle $\omega \in \in H^3(G,\CCu)$, the simple objects of $\CTR(\Vect_G^\omega)$ and a non-negative integer $e$. More precisely, $\omega$ is given as its list of values in $\ZZ/e\ZZ$, where $e$ is the exponent of $H^3(G,\CCu)$ and the simple objects are couples $(g,\chi)$ where $g \in G$ and $\chi$ is a projective character given by its list of values (in $\CC$) on the centralizer $C_G(g)$.\\

\begin{lstlisting}
ZwG_S := function(G, w, Simples, e)
local ord, listG, alphag, aval, bval, lista, 
      listb, s, a, b, g, simple1, simple2, x;
ord:= Size(G);
listG:=EnumeratorSorted(G);
alphag:=function(g)
 return function(x,y)
  return Alpha_symb(G,w,listG[g]) 
          (listG[x],listG[y]);
 end;
end;
aval:=[];
for simple1 in Simples do
 a:=simple1!.class;
 lista:=EnumeratorSorted(Centralizer(G,listG[a]));
 bval:=[];
 for simple2 in Simples  do
  b:=simple2!.class;
  listb:=EnumeratorSorted(Centralizer(G,listG[b]));
  s := 0;
  for g in [1..ord] do
   if not 
    Commm(Conjugation(listG[g],listG[a]),listG[b])
    =One(G) 
   then continue;
   fi;
   s := s + 
    E(e)^(
     alphag(a)( b, g) 
     - alphag(a)(g,Position(
        listG,Conjugation(listG[g]^-1, listG[b])))
    ) 
    * simple1!.chi[Position(
       lista, Conjugation(listG[g]^-1, listG[b]))]
    * simple2!.chi[Position(
       listb, Conjugation(listG[g], listG[a]))];
  od;
  Add(bval, 
   s 
   * Size(ConjugacyClass(G,listG[b]))
   / Size(Centralizer(G, listG[a]))
   );  
 od;
 Add(aval, bval);
od;
return aval;
end;

\end{lstlisting}
\begin{lstlisting}



ZwG_B:=function(G,cocyclevalues,Simples,e)
 local 	conjG,listG,posG,alphag,simple1,simple2,
        simple3,a,b,c,chia,chib,chic,lista,listb,
        listc,tensor,matrixb,rowc,sum,g,h,ap,bp,
        ap_inv,bp_inv,c_inv,cinv_hit_ap,ap_hit_bp,
        bp_hit_c,cinv_hit_apinv,commm_cinv_apinv,
        hinv_hit_commca,commm_bp_c,
        ginv_hit_commmbc,commm_bpinv_ap;
 listG:=EnumeratorSorted(G);
 posG:=function(g)
  return Position(listG,g);
 end;
 alphag:=function(g)
  return function(x,y)
   return Alpha_symb(G,cocyclevalues,listG[g]) 
           (listG[x],listG[y]);
   end;
 end;
 tensor:=[];
 for simple1 in Simples do
  a:=simple1!.class;
  lista:=EnumeratorSorted(Centralizer(G,listG[a]));
  matrixb:=[];
  for simple2 in Simples do
   b:=simple2!.class;
   listb:=EnumeratorSorted(Centralizer(G,listG[b]));
   rowc:=[];
   for simple3 in Simples do
    c:=simple3!.class;
    listc:=EnumeratorSorted(Centralizer(G,listG[c]));
    sum:=0;
    for g in [1..Size(listG)] do
     for h in [1..Size(listG)] do
      ap:=posG(Conjugation(listG[g],listG[a]));
      bp:=posG(Conjugation(listG[h],listG[b]));
      ap_inv:=posG(listG[ap]^-1);
      bp_inv:=posG(listG[bp]^-1);
      c_inv:=posG(listG[c]^-1);
      cinv_hit_ap:=
       posG(Conjugation(listG[c_inv],listG[ap]));
      ap_hit_bp:=
       posG(Conjugation(listG[ap],listG[bp]));
      bp_hit_c:=
       posG(Conjugation(listG[bp],listG[c]));
      cinv_hit_apinv:=
       posG(Conjugation(
        listG[c_inv],listG[ap_inv]));
      commm_cinv_apinv:=
       posG(Commm(listG[c_inv],listG[ap_inv]));
      hinv_hit_commca:=
       posG(Conjugation(
        listG[h]^-1,listG[commm_cinv_apinv]));
      commm_bp_c:=
       posG(Commm(listG[bp],listG[c]));
      ginv_hit_commmbc:=
       posG(Conjugation(
        listG[g]^-1,listG[commm_bp_c]));
      commm_bpinv_ap:=
       posG(Commm(listG[bp_inv],listG[ap]));
      if not 
       (Commm(Commm(listG[bp_inv],listG[ap]),
                    listG[c])
       =One(G) 
       and 
       Commm(Commm(listG[bp],listG[c]),
                   listG[ap])
       =One(G) ) 
      then continue;
      else
       sum:=sum + E(e) ^ (
		cocyclevalues[ap_hit_bp][ap][c]  
	-cocyclevalues[ap_hit_bp][c][cinv_hit_ap] 
	+cocyclevalues[bp_hit_c][ap_hit_bp]
	                          [cinv_hit_ap] 
	-cocyclevalues[bp_hit_c][cinv_hit_ap][bp] 
	+cocyclevalues[ap][bp_hit_c][bp] 
	-cocyclevalues[ap][bp][c] 
	-alphag( ap )  ( c , c_inv ) 
	+alphag( ap )  ( bp_hit_c , c_inv )  
	-alphag( ap_hit_bp )  
	    ( cinv_hit_ap , cinv_hit_apinv ) 
	+alphag( bp )  ( cinv_hit_apinv , ap ) 
	-alphag( bp_hit_c )  ( bp , bp_inv )  
	+alphag( c )  ( bp_inv , ap_hit_bp )  
	+alphag( b )  ( commm_cinv_apinv , h )  
	-alphag( b )  ( h , hinv_hit_commca )  
	+alphag( a )  ( commm_bp_c , g )  
	-alphag( a )  ( g , ginv_hit_commmbc ) ) 
	*simple1!.chi[ 
	  Position(lista,listG[ginv_hit_commmbc] )]   
	*simple2!.chi[ 
	  Position(listb,listG[hinv_hit_commca] )]  
	*simple3!.chi[ 
	  Position(listc,listG[commm_bpinv_ap] )] ;
      fi;
     od;
    od;
    sum:=sum * 
    ( Size(ConjugacyClass(G,listG[c])) 
    / ( Size(Centralizer(G,listG[a])) 
    * Size(Centralizer(G,listG[b])) ) );
    Add(rowc,sum);
   od;
   Add(matrixb,rowc);
  od;
  Add(tensor,matrixb);
 od;
 return tensor;
end;
\end{lstlisting}
Finally, checking whether one or more matrices or ``tensors'' indexed by a power of the same index set are identical up to a permutation of the index set (i.~e.\ simultaneous permutations of the matrix or tensor indices) is in itself a tricky task. We include a function that does this for  $T$, $S$ and $B$ (based on an analogous function that we wrote for the modular data) using some heuristic tricks to speed up the procedure.
\begin{lstlisting}
Same_S_T_B:=function(S1,T1,B1,S2,T2,B2)
 local l,P,lastbad,n,i,j,A,Q,blocks,PS1,PS2,rev;
 lastbad:=function(S1,T1,B1,S2,T2,B2,P,Q,l)
  local j,k;
   if P[l] in List([1..l-1],i->P[i]) then return true;
   fi;
   if T2[P[l]] <>  T1[Q[l]] then return true ;
   fi;
   for k in [1..l] do
    if S1[Q[k]][Q[l]]<>S2[P[k]][P[l]] then return true;
    fi;
   od;
   for j in [1..l] do
    for k in [1..l] do
     if B1[Q[j]][Q[k]][Q[l]]<>B2[P[j]][P[k]][P[l]]
      then return true;
     fi;
    od;
   od;
   return false;
 end;
 presorted:=function(S,T,B)
  local labels,labelset,perm,n,blocks,TS,SS,BS;
  n:=Size(T);
  labels:=List([1..n],
               i->[T[i],S[i][i],Collected(S[i])]);
  labelset:=Set(labels);
  perm:=[1..n];
  SortParallel(labels,perm);
  TS:=List(perm,i->T[i]);
  SS:=List(perm,i->List(perm,j->S[i][j]));
  BS:=List(perm,
           i->List(perm,
                   j->List(perm,
                           k->B[i][j][k])));
  blocks:=List(labelset,
               l->Filtered([1..n],j->labels[j]=l));
  return [SS,TS,BS,perm,blocks,labels];
 end;
 n:=Size(T1);
 PS1:=presorted(S1,T1,B1);
 PS2:=presorted(S2,T2,B2);
 if PS1[2]<>PS2[2] then
  return [false,"not the same T"];
 fi;
 if List(PS1[6],x->x[2])<>List(PS2[6],x->x[2]) then
    return [false,"T and diag(S) don't sort parallelly"];
 fi;
 if PS1[5]<>PS2[5]
  then return [false,"not the same blocks"];
 fi;
 if PS1[6]<>PS2[6]
  then return [false,"unsorted data don't match"];
 fi;
   
 blocks:=PS1[5];
 rev:=[];
 S1:=PS1[1];
 T1:=PS1[2];
 B1:=PS1[3];
 S2:=PS2[1];
 T2:=PS2[2];
 B2:=PS2[3];
 for i in [1..Size(blocks)] do
  for j in [1..Size(blocks[i])] do
      rev[blocks[i][j]]:=[i,j];
  od;
od;
nextinblock:=function(i)
 if rev[i][2]=Size(blocks[rev[i][1]]) then return n+1;
 fi;
 return i+1;
end;
Q:=[];
for i in [1..Maximum(List(blocks,Size))] do
 A:=List(Filtered(blocks,b->Size(b)>=i),b->b[i]);
 Q:=Concatenation(Q,A);
od;
l:=1;
P:=[Q[1]];
while true do
 if P[l]>n then
  l:=l-1;
  if l=0 then return false;
  fi;
  Remove(P);
  P[l]:=nextinblock(P[l]);
  continue;
 fi;
 if lastbad(S1,T1,B1,S2,T2,B2,P,Q,l) then
  P[l]:=nextinblock(P[l]);
  continue;
 fi;
 if l=n then return true;
 fi;
 l:=l+1;
 P[l]:=blocks[rev[Q[l]][1]][1];
od;       
end;
\end{lstlisting}
\bibliographystyle{alpha}
\bibliography{eigene,andere,arxiv,mathscinet}
\end{document}